\definecolor{labelkey}{rgb}{0,0.08,0.45}
\definecolor{refkey}{rgb}{0,0.6,0.0}
\definecolor{Brown}{rgb}{0.45,0.0,0.05}
\definecolor{dgreen}{rgb}{0.00,0.49,0.00}
\definecolor{dblue}{rgb}{0,0.08,0.75}
\newcommand{\scal}[2]{{\left\langle{{#1}\mid{#2}}\right\rangle}}
\newcommand{\menge}[2]{\big\{{#1}~\big |~{#2}\big\}}
\newcommand{\HH}{\ensuremath{{\mathcal H}}}
\newcommand{\AAA}{\ensuremath{{\mathcal A}}}
\newcommand{\spa}{\ensuremath{\operatorname{span}}}
\newcommand{\zer}{\ensuremath{\operatorname{zer}}}
\newcommand{\gra}{\ensuremath{\operatorname{gra}}}
\newcommand{\spc}{\ensuremath{\overline{\operatorname{span}}\,}}
\newcommand{\emp}{\ensuremath{{\varnothing}}}
\newcommand{\Id}{\ensuremath{\operatorname{Id}}\,}
\newcommand{\Fix}{\ensuremath{\operatorname{Fix}}\,}
\newcommand{\RR}{\ensuremath{\mathbb{R}}}
\newcommand{\RPP}{\ensuremath{\left]0,+\infty\right[}}
\newcommand{\RX}{\ensuremath{\left]-\infty,+\infty\right]}}
\newcommand{\NN}{\ensuremath{\mathbb N}}
\newcommand{\gr}{\ensuremath{\operatorname{gra}}}
\newcommand{\exi}{\ensuremath{\exists\,}}
\newcommand{\sri}{\ensuremath{\operatorname{sri}}}
\newcommand{\pinf}{\ensuremath{{+\infty}}}
\newcommand{\weakly}{\ensuremath{\:\rightharpoonup\:}}
\newcommand{\dom}{\ensuremath{\operatorname{dom}}}
\newcommand{\prox}{\ensuremath{\operatorname{prox}}}
\newcommand{\inte}{\ensuremath{\operatorname{int}}}
\newtheorem{theorem}{Theorem}[section]
\newtheorem{lemma}[theorem]{Lemma}
\newtheorem{proposition}[theorem]{Proposition}
\theoremstyle{plain}{\theorembodyfont{\rmfamily}%
\newtheorem{assumption}[theorem]{Assumption}}
\theoremstyle{plain}{\theorembodyfont{\rmfamily}%
}
\theoremstyle{plain}{\theorembodyfont{\rmfamily}%
\newtheorem{example}[theorem]{Example}}
\theoremstyle{plain}{\theorembodyfont{\rmfamily}%
\newtheorem{remark}[theorem]{Remark}}
\theoremstyle{plain}{\theorembodyfont{\rmfamily}%
}
\theoremstyle{plain}{\theorembodyfont{\rmfamily}%
\newtheorem{problem}[theorem]{Problem}}
\numberwithin{equation}{section}
\journal{Nonlinear Analysis: Theory, Methods \& Applications}
\begin{document}

\begin{frontmatter}

\title{\sffamily A DOUGLAS--RACHFORD SPLITTING METHOD FOR SOLVING 
EQUILIBRIUM PROBLEMS\tnoteref{title}}
\tnotetext[title]{This work was supported by CONICYT under grant 
FONDECYT N$^\text{o}$ 3120054.}
\author{Luis M. Brice\~{n}o-Arias\fnref{fono}}
\ead{lbriceno@dim.uchile.cl}
\fntext[fono]{Centro de Modelamiento Matem\'atico--CNRS-UMI 2807, 
Blanco Encalada 2120, Santiago, Chile, 7th floor, of. 706,
phone: +56 2 978 4930.}
\ead[url]{http://www.dim.uchile.cl/$\sim$ lbriceno}
\address{
\small Center for Mathematical Modeling--CNRS-UMI 2807\\ 
\small  University of Chile. 
}

\begin{abstract}
\noindent
We propose a splitting method for solving equilibrium problems
involving the sum of two bifunctions
satisfying standard conditions.
We prove that this problem is equivalent to find 
a zero of the sum of two appropriate maximally monotone operators
under a suitable qualification condition. 
Our algorithm is a consequence of the Douglas--Rachford 
splitting applied to this auxiliary monotone inclusion.
Connections between monotone inclusions and equilibrium 
problems are studied.
\end{abstract}

\begin{keyword}
Douglas--Rachford splitting\sep equilibrium problem\sep
maximally monotone operator\sep monotone inclusion

\MSC[2010] Primary: 90B74\sep 90C30. Secondary: 47J20\sep 47J22
\end{keyword}

\end{frontmatter}

\section{Introduction}
In the past years, several works have been devoted to
the equilibrium problem
\begin{equation}
\label{e:tipic}
\text{find}\quad x\in C\quad\text{such that}\quad (\forall y\in C)
\quad H(x,y)\geq0,
\end{equation}
where $C$ is a nonempty closed convex subset of the real Hilbert 
space $\HH$, and $H\colon C\times C\to\RR$ satisfies the following assumption.
\begin{assumption}
\label{a:1}
The bifunction $H\colon C\times C\to\RR$ satisfies
\begin{enumerate} 
\item \label{a:1i}$(\forall x\in C)\quad H(x,x)=0$.
\item \label{a:1ii}$(\forall (x,y)\in C\times C)\quad H(x,y)+
H(y,x)\leq0$.
\item \label{a:1iii} For every $x$ in $C$, $H(x,\cdot)\colon C\to\RR$ 
is lower semicontinuous and convex.
\item \label{a:1iv} $(\forall (x,y,z)\in C^3)\quad 
\underset{\varepsilon\to 0^+}{\varlimsup}\:
H((1-\varepsilon)x+\varepsilon z,y)\leq H(x,y)$.
\end{enumerate}
\end{assumption}
Throughout this paper, the solution set of \eqref{e:tipic} will be 
denoted by $S_{H}$.

Problem \eqref{e:tipic} models a wide variety of 
problems including complementarity problems, optimization problems,
feasibility problems, Nash equilibrium problems, variational
inequalities, and fixed point problems \cite{Alle77,Bian96,Blum94,%
ComH05,Flam97,Iuse03,Iuse03b,Konn03,Moud02,Oett97}. Sometimes the 
bifunction $H$ is difficult to manipulate
but it can be considered as the sum of two simpler bifunctions
$F$ and $G$ satisfying Assumption~\ref{a:1} (see, for example, 
\cite{Moud10}). This is the context in which we aim to solve 
problem \eqref{e:tipic}. Our problem is formulated as follows.
\begin{problem}
\label{prob:1}
Let $C$ be a nonempty closed convex subset of the real Hilbert 
space $\HH$. Suppose that $F\colon C\times C\to\RR$ and $G\colon C\times C\to\RR$
are two bifunctions satisfying Assumption~\ref{a:1}. The problem is to
\begin{equation}
\label{e:pmain}
\text{find}\quad x\in C\quad\text{such that}\quad
(\forall y\in C)\quad F(x,y)+G(x,y)\geq0,
\end{equation}
under the assumption that such a solution exists, or equivalently, 
$S_{F+G}\neq\emp$.
\end{problem}

In the particular instance when $G\equiv0$, Problem~\ref{prob:1} 
becomes \eqref{e:tipic} with $H=F$, which can be solved by the
methods proposed in \cite{Flam97,Iuse03,Konn03,Moud03,MouT99}.
These methods are mostly inspired from the proximal fixed point
algorithm \cite{Mart70,Rock76}. The method proposed in \cite{Reic11}
can be applied to this case when $F\colon(x,y)\mapsto\scal{Bx}{y-x}$
and $B$ is maximally monotone.
On the other hand, when $G\colon(x,y)\mapsto\scal{Bx}{y-x}$, where
$B\colon\HH\to\HH$ is a cocoercive operator, weakly convergent 
splitting methods for solving Problem~\ref{prob:1} are proposed in 
\cite{ComH05,Moud02}. Several methods for solving
Problem~\ref{prob:1} in particular instances of the bifunction $G$
can be found in \cite{Nfao1,Ceng08,Ceng10,Konn05,Peng10,Peng09,Peng09b,Yao09} 
and the references therein.
In the general case, sequential and parallel splitting methods are 
proposed in \cite{Moud09} with guaranteed ergodic convergence. 
A disadvantage of these methods is the involvement of vanishing
parameters that leads to numerical instabilities, 
which make them of limited use in applications. The purpose of this
paper
is to address the general case by providing a non-ergodic weakly 
convergent algorithm which solves Problem~\ref{prob:1}. The proposed 
method is a consequence of the Douglas-Rachford splitting method
\cite{Lion79,Svai11} 
applied to an auxiliary monotone inclusion involving an appropriate 
choice of maximally monotone operators.
This choice of monotone operators allows us to deduce interesting 
relations between monotone equilibrium problems and monotone inclusions 
in Hilbert spaces. Some of these relations are deduced from related
results in Banach spaces \cite{Aoya08,Saba11}.

The paper is organized as follows. In Section~\ref{sec:2}, we define
an auxiliary monotone inclusion which is equivalent to
Problem~\ref{prob:1} under a suitable qualification condition, 
and some relations between monotone inclusions and equilibrium
problems are examined. In Section~\ref{sec:3}, we propose a variant of
the Douglas--Rachford splitting studied in \cite{Livre1,Svai11} 
and we derive our method whose iterates converge weakly to a solution
of Problem~\ref{prob:1}. We start with some notation and useful
properties.

{\bf Notation and preliminaries}\\
Throughout this paper, $\HH$ denotes a real Hilbert space,
$\scal{\cdot}{\cdot}$ denotes its inner product, and $\|\cdot\|$ denotes
its induced norm. 
Let $\mathcal{A}\colon\HH\to 2^{\HH}$ be a set-valued operator. 
Then $\dom\mathcal{A}=\menge{x\in\HH}{\mathcal{A}x\neq\emp}$ is the
domain of $\mathcal{A}$ 
and $\gr\mathcal{A}=\menge{(x,u)\in\HH\times\HH}{u\in\mathcal{A}x}$ 
is its graph. The operator $\mathcal{A}$ is monotone if 
\begin{equation}
(\forall (x,u)\in\gr\mathcal{A})(\forall (y,v)\in\gr\mathcal{A})\quad
\scal{x-y}{u-v}\geq0,
\end{equation}
and it is called maximally monotone if its graph is not properly
contained in the graph of any other monotone operator in $\HH$.
In this case, the resolvent of $\mathcal{A}$,
$\mathcal{J}_{\mathcal{A}}=(\Id+\mathcal{A})^{-1}$, is well 
defined, single valued, and $\dom \mathcal{J}_{\mathcal{A}}=\HH$. The
reflection operator
$\mathcal{R}_{\mathcal{A}}=2\mathcal{J}_{\mathcal{A}}-\Id$ is
nonexpansive.

For a single-valued operator $T\colon\dom T\subset\HH\to\HH$, 
the set of fixed points is
\begin{equation}
\Fix T=\menge{x\in\HH}{x=Tx}.
\end{equation}
We say that $T$ is nonexpansive if
\begin{equation}
(\forall x\in\dom T)(\forall y\in\dom T)\quad
\|Tx-Ty\|\leq\|x-y\|
\end{equation}
and that $T$ is firmly nonexpansive if
\begin{equation}
\label{e:firmnonexp}
(\forall x\in\dom T)(\forall y\in\dom T)\quad
\|Tx-Ty\|^2\leq\|x-y\|^2-\|(\Id-T)x-(\Id-T)y\|^2.
\end{equation}
\begin{lemma}[{\em cf.} \mbox{\cite[Lemma~5.1]{Opti04}}]
\label{l:1}
Let $T\colon\dom T=\HH\to\HH$ be a nonexpansive operator such that
$\Fix T\neq\emp$. Let 
$(\mu_n)_{n\in\NN}$ be a sequence in $\left]0,1\right[$ and
$(c_n)_{n\in\NN}$ be a sequence in $\HH$ such that 
$\sum_{n\in\NN}\mu_n(1-\mu_n)=\pinf$ and 
$\sum_{n\in\NN}\mu_n\|c_n\|<\pinf$. Let $x_0\in\HH$ and
set
\begin{equation}
(\forall n\in\NN)\quad
x_{n+1}=x_n+\mu_n(Tx_n+c_n-x_n).
\end{equation}
Then $(x_n)_{n\in\NN}$ converges weakly to $x\in\Fix T$ and
$(x_n-Tx_n)_{n\in\NN}$ converges strongly to $0$.
\end{lemma}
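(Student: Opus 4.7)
The plan is to prove the lemma via the classical Krasnosel'ski\u{\i}--Mann scheme, adapted to tolerate summable errors, combined with Opial's lemma for weak convergence. Fix $z\in\Fix T$ throughout.

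\textbf{Step 1: Fej\'er-type inequality with errors.} First I would expand $\|x_{n+1}-z\|^{2}$ using the identity
\begin{equation*}
\|\alpha u+(1-\alpha)v\|^{2}=\alpha\|u\|^{2}+(1-\alpha)\|v\|^{2}-\alpha(1-\alpha)\|u-v\|^{2},
\end{equation*}
applied to $u=Tx_{n}-z$, $v=x_{n}-z$, and $\alpha=\mu_{n}$, together with the nonexpansiveness bound $\|Tx_{n}-z\|\leq\|x_{n}-z\|$. Treating the error term $\mu_{n}c_{n}$ with the triangle inequality, I expect to obtain an estimate of the form
\begin{equation*}
\|x_{n+1}-z\|^{2}\leq\|x_{n}-z\|^{2}-\mu_{n}(1-\mu_{n})\|Tx_{n}-x_{n}\|^{2}+\mu_{n}\|c_{n}\|\bigl(2\|x_{n}-z\|+\mu_{n}\|c_{n}\|\bigr).
\end{equation*}

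\textbf{Step 2: Boundedness and convergence of distances.} The triangle inequality already gives $\|x_{n+1}-z\|\leq\|x_{n}-z\|+\mu_{n}\|c_{n}\|$, so by summability of $(\mu_{n}\|c_{n}\|)_{n\in\NN}$ the sequence $(\|x_{n}-z\|)_{n\in\NN}$ is bounded and, by a standard quasi-Fej\'er argument, convergent in $\RP$. Hence $(x_{n})_{n\in\NN}$ is bounded, and plugging this boundedness into Step~1 and summing yields
\begin{equation*}
\sum_{n\in\NN}\mu_{n}(1-\mu_{n})\|Tx_{n}-x_{n}\|^{2}<\pinf.
\end{equation*}

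\textbf{Step 3: Strong convergence of the residual.} The main obstacle is upgrading the above summability to $\|Tx_{n}-x_{n}\|\to 0$, because $\sum\mu_{n}(1-\mu_{n})=\pinf$ only secures $\liminf\|Tx_{n}-x_{n}\|=0$. The standard remedy is to establish an approximate monotonicity
\begin{equation*}
\|Tx_{n+1}-x_{n+1}\|\leq\|Tx_{n}-x_{n}\|+2\|c_{n}\|,
\end{equation*}
which follows by writing $Tx_{n+1}-x_{n+1}=(Tx_{n+1}-Tx_{n})+(1-\mu_{n})(Tx_{n}-x_{n})-\mu_{n}c_{n}$ and applying nonexpansiveness of $T$ to bound $\|Tx_{n+1}-Tx_{n}\|\leq(1-\mu_{n})\|x_{n}-Tx_{n}\|+\mu_{n}\|c_{n}\|\cdot\text{(adjustment)}$. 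Once $(\|Tx_{n}-x_{n}\|)_{n\in\NN}$ is a quasi-monotone sequence with summable perturbation, it must converge, and combined with the liminf being $0$ (from Step~2) we conclude $\|Tx_{n}-x_{n}\|\to 0$ strongly.

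\textbf{Step 4: Weak convergence via Opial.} Since $(x_{n})_{n\in\NN}$ is bounded, let $x$ be any weak cluster point along some subsequence $(x_{n_{k}})_{k\in\NN}$. By Step~3, $(x_{n_{k}}-Tx_{n_{k}})_{k\in\NN}\to 0$ strongly, so the demiclosedness principle for the nonexpansive operator $T$ at $0$ forces $x\in\Fix T$. Finally, Opial's lemma applied to the Fej\'er-type behaviour from Step~2 (the limit $\lim_{n}\|x_{n}-z\|$ exists for every $z\in\Fix T$) guarantees uniqueness of the weak cluster point, yielding $x_{n}\weakly x\in\Fix T$, which completes the proof.
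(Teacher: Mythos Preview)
The paper does not supply its own proof of this lemma; it is quoted from \cite[Lemma~5.1]{Opti04}. Your outline is precisely the standard Krasnosel'ski\u{\i}--Mann-with-errors argument underlying that reference, and it is essentially correct.

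One point in Step~3 needs tightening. The quasi-monotonicity estimate you actually obtain is
\[
\|Tx_{n+1}-x_{n+1}\|\leq\|Tx_{n}-x_{n}\|+2\mu_{n}\|c_{n}\|,
\]
coming from $\|Tx_{n+1}-Tx_{n}\|\leq\|x_{n+1}-x_{n}\|\leq\mu_{n}\|Tx_{n}-x_{n}\|+\mu_{n}\|c_{n}\|$ combined with the decomposition you wrote. Your stated bound with $+2\|c_{n}\|$ is still a valid inequality (since $\mu_{n}<1$), but the hypotheses only give $\sum_{n}\mu_{n}\|c_{n}\|<\pinf$, not $\sum_{n}\|c_{n}\|<\pinf$, so you must keep the $\mu_{n}$ factor to invoke the ``summable perturbation'' argument. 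With that correction Steps~1--4 go through as written.
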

Now let $F\colon C\times C\to\RR$ be a bifunction satisfying Assumption~\ref{a:1}. 
The resolvent of $F$ is the operator
\begin{equation}
\label{e:J_F}
J_F\colon \HH\to 2^C\colon x\mapsto\menge{z\in C}
{(\forall y\in C)\quad F(z,y)+\scal{z-x}{y-z}\geq 0},
\end{equation}
which is single valued and firmly nonexpansive \cite[Lemma~2.12]{ComH05}, and 
the reflection operator
\begin{equation}
\label{e:reflec}
R_F\colon\HH\to\HH\colon x\mapsto 2J_Fx-x
\end{equation}
is nonexpansive.

Let $C\subset\HH$ be nonempty, closed, and convex.
We say that $0$ lies in the strong relative interior of $C$, 
in symbol, $0\in\sri C$, if $\bigcup_{\lambda>0}\lambda C=\spc C$.
The normal cone of $C$ is the maximally monotone operator 
\begin{equation}
\mathcal{N}_C\colon\HH\to 2^{\HH}\colon x\mapsto
\begin{cases}
\menge{u\in\HH}{(\forall y\in
C)\quad\scal{y-x}{u}\leq0},\quad&\text{if } x\in C;\\
\emp,&\text{otherwise}. 
\end{cases}
\end{equation}

We denote by $\Gamma_0(\HH)$ the family of 
lower semicontinuous convex functions $f$ from $\HH$ to $\RX$
which are proper
in the sense that $\dom f=\menge{x\in\HH}{f(x)<\pinf}$ is nonempty.
The 
subdifferential of $f\in\Gamma_0(\HH)$ is the maximally monotone operator
$\partial f\colon\HH\to 2^{\HH}\colon x\mapsto
\menge{u\in\HH}{(\forall y\in\HH)\;\:\scal{y-x}{u}+f(x)\leq f(y)}$.
For background on convex analysis, monotone operator theory, and 
equilibrium problems, the reader is referred to
\cite{Livre1,Blum94,ComH05}.

\section{Monotone inclusions and equilibrium problems}
\label{sec:2}
The basis of the method proposed in this paper for solving 
Problem~\ref{prob:1} is that it can be formulated as finding
a zero of the sum of two appropriate maximally monotone operators.
In this section, we define this auxiliary monotone inclusion and,
additionally, we study a class of monotone inclusions which can be 
formulated as an equilibrium problem.

\subsection{Monotone inclusion associated to equilibrium problems}
We first recall the maximal monotone operator associated to problem 
\eqref{e:tipic} and some related properties. The 
following result can be deduced from \cite[Theorem~3.5]{Aoya08} and
\cite[Proposition~4.2]{Saba11}, which have been proved in Banach
spaces.
\begin{proposition}
\label{prop:A}
Let $F\colon C\times C\to\RR$ be such that Assumption~\ref{a:1} holds and set
\begin{equation}
\label{e:maxmon}
\AAA_{F}\colon \HH\to 2^{\HH}\colon x\mapsto
\begin{cases}
\menge{u\in\HH}{(\forall y\in C)\quad F(x,y)+\scal{x-y}{u}\geq0},\quad&\text{if }x\in C;\\
\emp,&\text{otherwise}.
\end{cases}
\end{equation}
Then the following hold:
\begin{enumerate}
\item \label{prop:Ai} $\AAA_F$ is maximally monotone.
\item $S_F=\zer\AAA_F$.
\item \label{prop:Aii} For every $\gamma\in\RPP$, 
$\mathcal{J}_{\gamma \AAA_F}=J_{\gamma F}$.
\end{enumerate}
\end{proposition}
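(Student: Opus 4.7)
The three items interlock, so the plan is to prove them in the order: monotonicity of $\AAA_F$, then the resolvent identity (iii), then maximality via Minty's theorem, and finally (ii) as a direct unfolding of the definition of $\AAA_F$.

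For monotonicity, I would pick $(x_1,u_1),(x_2,u_2)\in\gra\AAA_F$, which already forces $x_1,x_2\in C$. Testing the defining inequality in \eqref{e:maxmon} at $x_1$ with $y=x_2$ and at $x_2$ with $y=x_1$ gives $F(x_1,x_2)+\scal{x_1-x_2}{u_1}\geq 0$ and $F(x_2,x_1)+\scal{x_2-x_1}{u_2}\geq 0$. Adding the two and invoking Assumption~\ref{a:1}\ref{a:1ii} kills the bifunction terms and leaves $\scal{x_1-x_2}{u_1-u_2}\geq 0$.

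For (iii), I would fix $\gamma\in\RPP$ and $x\in\HH$ and unfold $z\in(\Id+\gamma\AAA_F)^{-1}x$ as $z\in C$ together with $(\forall y\in C)\ F(z,y)+\gamma^{-1}\scal{z-y}{x-z}\geq 0$. Multiplying by $\gamma$ and using the inner-product identity $\scal{z-y}{x-z}=\scal{z-x}{y-z}$ recovers exactly the defining inequality of $J_{\gamma F}x$ in \eqref{e:J_F}. Since $J_{\gamma F}$ is single-valued with full domain (the property of resolvents of equilibrium bifunctions cited below \eqref{e:J_F}), the two set-valued operators coincide pointwise.

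Maximality is then immediate from Minty's theorem: a monotone operator on $\HH$ is maximally monotone iff $\rank(\Id+\AAA_F)=\HH$, and (iii) with $\gamma=1$ identifies $(\Id+\AAA_F)^{-1}$ with $J_F$, whose domain is all of $\HH$. Finally, (ii) follows by setting $u=0$ in \eqref{e:maxmon}: $0\in\AAA_F x$ is equivalent to $x\in C$ together with $F(x,y)\geq 0$ for every $y\in C$, i.e., $x\in S_F$. The only genuinely substantive step is maximality, and the plan outsources its analytic content to the existence and single-valuedness of $J_F$ from \cite[Lemma~2.12]{ComH05}; without this classical fact one would have to reproduce the KKM-based machinery of Blum--Oettli used in \cite{Aoya08,Saba11}, which is the real difficulty hidden behind the short argument.
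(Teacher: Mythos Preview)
Your argument is correct. The paper, however, does not supply its own proof of this proposition: it simply records that the result ``can be deduced from \cite[Theorem~3.5]{Aoya08} and \cite[Proposition~4.2]{Saba11}, which have been proved in Banach spaces,'' and moves on. So there is no in-paper proof to compare against in detail.

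What you do differently is give a self-contained Hilbert-space argument that stays entirely within the tools already on the table: monotonicity from Assumption~\ref{a:1}\ref{a:1ii}, the resolvent identity~(iii) by unfolding \eqref{e:maxmon} and \eqref{e:J_F}, and then maximality via Minty's surjectivity criterion together with the fact---already quoted in the paper as \cite[Lemma~2.12]{ComH05}---that $J_F$ is single-valued with $\dom J_F=\HH$. This is more economical than invoking the Banach-space results of \cite{Aoya08,Saba11}, whose proofs rely on the Fitzpatrick-function or KKM-type existence machinery; as you correctly note, the analytic work is not avoided but merely delegated to \cite[Lemma~2.12]{ComH05}. Your route has the advantage of being transparent and of reusing a reference the paper already depends on, at the cost of being specific to the Hilbert setting (which is all that is needed here).
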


The following proposition allows us to formulate Problem~\ref{prob:1}
as an auxiliary monotone inclusion involving two maximally
monotone operators obtained from
Proposition~\ref{prop:A}.

\begin{theorem}
\label{prop:1}
Let $C$, $F$, and $G$ be as in Problem~\ref{prob:1}. 
Then the following hold.
\begin{enumerate}
\item\label{prop:1i} $\zer(\AAA_F+\AAA_G)\subset S_{F+G}$.
\item\label{prop:1ii} Suppose that $\spa(C-C)$ is closed. Then, 
$\zer(\AAA_F+\AAA_G)= S_{F+G}$.
\end{enumerate}
\end{theorem}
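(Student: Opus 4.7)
The plan is to handle (i) by a direct summation of the inequalities defining $\AAA_F$ and $\AAA_G$, and (ii) by identifying $\AAA_F x$ and $\AAA_G x$ with subdifferentials of suitable convex extensions of $F(x,\cdot)$ and $G(x,\cdot)$, then invoking an Attouch--Br\'ezis--type subdifferential sum formula.

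For (i), I would fix $x\in\zer(\AAA_F+\AAA_G)$ and pick $u\in\AAA_Fx$ and $v\in\AAA_Gx$ with $u+v=0$; in view of \eqref{e:maxmon} this already forces $x\in C$. For arbitrary $y\in C$, adding the defining inequalities $F(x,y)+\scal{x-y}{u}\geq 0$ and $G(x,y)+\scal{x-y}{v}\geq 0$ eliminates the cross term and yields $F(x,y)+G(x,y)\geq 0$, hence $x\in S_{F+G}$.

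For (ii), the core construction is the following: for each $x\in C$, let $f_x$ and $g_x$ be the extensions of $F(x,\cdot)$ and $G(x,\cdot)$ to $\HH$ by $\pinf$ outside $C$. Items~\ref{a:1i} and~\ref{a:1iii} of Assumption~\ref{a:1} ensure that both lie in $\Gamma_0(\HH)$ with $\dom f_x=\dom g_x=C$ and $f_x(x)=g_x(x)=0$. A direct comparison of the subgradient inequality at $x$ with \eqref{e:maxmon} gives $\partial f_x(x)=\AAA_Fx$ and $\partial g_x(x)=\AAA_Gx$. Now assume $x\in S_{F+G}$: then $(f_x+g_x)(y)\geq 0=(f_x+g_x)(x)$ for every $y\in\HH$, so $0\in\partial(f_x+g_x)(x)$, and it remains to establish the sum rule $\partial(f_x+g_x)(x)=\partial f_x(x)+\partial g_x(x)$, which immediately yields a decomposition $0=u+v$ with $u\in\AAA_Fx$ and $v\in\AAA_Gx$.

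To activate the sum rule I would appeal to the Attouch--Br\'ezis qualification $0\in\sri(\dom f_x-\dom g_x)=\sri(C-C)$ (available in Hilbert spaces through, e.g., \cite{Livre1}) and translate it into the hypothesis of the proposition. Since $C-C$ is convex, symmetric, and contains $0$, the set $\bigcup_{\lambda>0}\lambda(C-C)$ is a linear subspace and therefore coincides with $\spa(C-C)$; consequently $0\in\sri(C-C)$ is equivalent to $\spa(C-C)=\spc(C-C)$, which is exactly the stated assumption. The main technical point is precisely this translation together with the careful invocation of the subdifferential sum formula in the infinite-dimensional setting; the equilibrium-problem side of the argument then reduces to bookkeeping.
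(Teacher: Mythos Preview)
Your proposal is correct and follows essentially the same route as the paper: part~\ref{prop:1i} is handled by direct summation of the defining inequalities, and part~\ref{prop:1ii} by extending $F(x,\cdot)$ and $G(x,\cdot)$ to functions in $\Gamma_0(\HH)$ with domain $C$, applying Fermat's rule at the minimizer $x$, and then invoking the subdifferential sum rule under the qualification $0\in\sri(C-C)$. The only cosmetic differences are that the paper unfolds the subgradient inequalities at the end rather than stating $\partial f_x(x)=\AAA_Fx$ upfront, and that you supply an explicit argument for why closedness of $\spa(C-C)$ is equivalent to $0\in\sri(C-C)$, a step the paper asserts without justification.
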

\begin{proof}
\ref{prop:1i}.
Let $x\in\zer(\AAA_F+\AAA_G)$. Thus, $x\in C$ and 
there exists $u\in \AAA_Fx\cap-\AAA_Gx$, which yield, 
by \eqref{e:maxmon},
\begin{equation}
\begin{cases}
(\forall y\in C)\quad F(x,y)+\scal{x-y}{u}\geq0\\
(\forall y\in C)\quad G(x,y)+\scal{x-y}{-u}\geq0.
\end{cases}
\end{equation}
Hence, by adding both inequalities we obtain 
\begin{equation}
(\forall y\in C)\quad  F(x,y)+G(x,y)\geq0
\end{equation}
and, therefore, $x\in S_{F+G}$.

\ref{prop:1ii}. Let $x\in S_{F+G}$ and define
\begin{equation}
\label{e:f1f2}
\begin{cases}
f\colon\HH\to\RX\colon y\mapsto
\begin{cases}
F(x,y),\quad&\text{if}\:\: y\in C;\\
\pinf,&\text{otherwise};
\end{cases}\vspace{0.3cm}\\
g\colon\HH\to\RX\colon y\mapsto
\begin{cases}
G(x,y),\quad&\text{if}\:\: y\in C;\\
\pinf,&\text{otherwise}.
\end{cases}\\
\end{cases}
\end{equation}
Assumption~\ref{a:1} asserts that $f$ and $g$
are in $\Gamma_0(\HH)$, $\dom f=\dom g=C\neq\emp$, and since 
$x\in S_{F+G}$, \eqref{e:pmain} yields $f+g\geq0$.
Hence, it follows from Assumption~\ref{a:1}\ref{a:1i} and 
\eqref{e:f1f2} that
\begin{equation}
\label{e:eq333}
\min_{y\in\HH}\,\big(f(y)+g(y)\big)=f(x)+g(x)=0.
\end{equation}
Thus, Fermat's rule \cite[Theorem~16.2]{Livre1} yields
$0\in\partial(f+g)(x)$. Since $\spa(C-C)$ is closed, we have 
$0\in\sri(C-C)=\sri(\dom f-\dom g)$. Therefore, it follows from 
\cite[Corollary~16.38]{Livre1} that 
$0\in\partial f(x)+\partial g(x)$ which implies that there exists 
$u_0\in\HH$ such that $u_0\in\partial f(x)$ and 
$-u_0\in\partial g(x)$. This is equivalent to 
\begin{equation}
\label{e:desig}
\begin{cases}
(\forall y\in\HH)\quad f(x)+\scal{y-x}{u_0}\leq f(y)\\
(\forall y\in\HH)\quad g(x)+\scal{y-x}{-u_0}\leq g(y).
\end{cases}
\end{equation}
Since Assumption~\ref{a:1}\ref{a:1i} and \eqref{e:f1f2} 
yield $f(x)=g(x)=0$, we have that \eqref{e:desig} is equivalent to
\begin{equation}
\begin{cases}
(\forall y\in C)\quad F(x,y)+\scal{x-y}{u_0}\geq0 \\
(\forall y\in C)\quad G(x,y)+\scal{x-y}{-u_0}\geq0.
\end{cases}
\end{equation}
Hence, we conclude from \eqref{e:maxmon} that 
$u_0\in \AAA_Fx\cap-\AAA_Gx$, which yields 
$x\in\zer(\AAA_F+\AAA_G)$.
\end{proof}

\subsection{Equilibrium problems associated to monotone inclusions}
We formulate some monotone inclusions as equilibrium problems 
by defining a bifunction associated to a class of maximally monotone operators.
In the following proposition we present this bifunction and its properties.
\begin{proposition}{\rm({\em cf.} \cite[Lemma~2.15]{ComH05})}
\label{prop:22}
Let $\mathcal{A}\colon\HH\to 2^{\HH}$ be a maximally monotone operator
and
suppose that $C\subset\inte\dom\mathcal{A}$. 
Set 
\begin{equation}
\label{e:FFF}
F_{\mathcal{A}}\colon C\times C\to\RR\colon (x,y)\mapsto\max_{u\in
\mathcal{A}x}\scal{y-x}{u}.  
\end{equation}
Then the following hold:
\begin{enumerate}
\item $F_{\mathcal{A}}$ satisfy Assumption~\ref{a:1}.
\item $J_{F_{\mathcal{A}}}=\mathcal{J}_{\mathcal{A}+\mathcal{N}_C}$.
\end{enumerate}
\end{proposition}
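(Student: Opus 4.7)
Since $C\subset\inte\dom\mathcal{A}$, the maximally monotone operator $\mathcal{A}$ is locally bounded on $C$ and $\mathcal{A}x$ is nonempty, convex, and weakly compact for every $x\in C$; hence $F_{\mathcal{A}}$ in \eqref{e:FFF} is well-defined and finite. For item (i), conditions \ref{a:1i} and \ref{a:1iii} are immediate: $F_{\mathcal{A}}(x,x)=0$, and $F_{\mathcal{A}}(x,\cdot)$ is a supremum of continuous affine functions, hence convex and lower semicontinuous. For \ref{a:1ii}, I would decouple the two maxima:
$$F_{\mathcal{A}}(x,y)+F_{\mathcal{A}}(y,x)=\max_{u\in\mathcal{A}x,\,v\in\mathcal{A}y}\scal{y-x}{u-v}\leq 0,$$
where the last inequality is precisely the monotonicity of $\mathcal{A}$.

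To verify \ref{a:1iv}, I would fix $(x,y,z)\in C^3$, set $x_\varepsilon=(1-\varepsilon)x+\varepsilon z$, and choose $u_\varepsilon\in\mathcal{A}x_\varepsilon$ attaining the maximum in $F_{\mathcal{A}}(x_\varepsilon,y)$. Local boundedness of $\mathcal{A}$ at $x\in\inte\dom\mathcal{A}$ guarantees that $(u_\varepsilon)$ stays bounded for $\varepsilon$ small. Passing to a weakly convergent subsequence $u_{\varepsilon_n}\weakly u^\star$ and using that $\gra\mathcal{A}$ is sequentially closed in the strong-weak topology, we obtain $u^\star\in\mathcal{A}x$, and then
$$F_{\mathcal{A}}(x_{\varepsilon_n},y)=\scal{y-x_{\varepsilon_n}}{u_{\varepsilon_n}}\to\scal{y-x}{u^\star}\leq F_{\mathcal{A}}(x,y).$$
A standard subsequence argument on $\varlimsup$ then delivers the required upper hemicontinuity.

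For item (ii), I would first invoke Rockafellar's sum theorem, applicable because $C\subset\inte\dom\mathcal{A}$ and $\dom\mathcal{N}_C=C$, so $\mathcal{A}+\mathcal{N}_C$ is maximally monotone and $\mathcal{J}_{\mathcal{A}+\mathcal{N}_C}$ is single-valued on $\HH$. Unpacking the definitions \eqref{e:J_F} and \eqref{e:FFF}, $z=J_{F_{\mathcal{A}}}x$ means $z\in C$ and
$$\max_{u\in\mathcal{A}z}\scal{y-z}{u-(x-z)}\geq 0\qquad(\forall y\in C),$$
whereas $z=\mathcal{J}_{\mathcal{A}+\mathcal{N}_C}x$ means $z\in C$ together with the existence of $u^\star\in\mathcal{A}z$ satisfying $\scal{y-z}{u^\star-(x-z)}\geq 0$ for every $y\in C$ (the rewriting of $x-z-u^\star\in\mathcal{N}_Cz$). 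The implication $\mathcal{J}_{\mathcal{A}+\mathcal{N}_C}x\Rightarrow J_{F_{\mathcal{A}}}x$ is trivial. For the converse, I would apply Sion's minimax theorem to the bilinear map $(u,y)\mapsto\scal{y-z}{u-(x-z)}$ on $\mathcal{A}z\times C$: the factor $\mathcal{A}z$ is weakly compact and convex, and the map is affine (hence quasi-concave and weakly continuous) in $u$ and affine (hence quasi-convex and strongly continuous) in $y$. This yields
$$0\leq\inf_{y\in C}\max_{u\in\mathcal{A}z}\scal{y-z}{u-(x-z)}=\sup_{u\in\mathcal{A}z}\inf_{y\in C}\scal{y-z}{u-(x-z)}.$$
The outer objective is an infimum of weakly continuous affine functions of $u$, hence weakly upper semicontinuous, so the supremum over the weakly compact $\mathcal{A}z$ is attained at some $u^\star$, which provides the desired decomposition.

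The main obstacle is the backward direction of (ii): exhibiting a \emph{single} $u^\star\in\mathcal{A}z$ that certifies the inequality uniformly in $y\in C$, rather than a $y$-dependent selection read off directly from $F_{\mathcal{A}}(z,y)$. It is precisely here that the qualification $C\subset\inte\dom\mathcal{A}$ pays off, by delivering the weak compactness of $\mathcal{A}z$ required to apply Sion's theorem and to ensure attainment of the outer supremum.
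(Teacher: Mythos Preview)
Your argument is correct. Note, however, that the paper does not supply its own proof of this proposition: it records the statement with a reference to \cite[Lemma~2.15]{ComH05} and moves on. Your proposal is therefore a self-contained substitute rather than a rederivation of something present in the text.

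One point of comparison is nonetheless worth making. The crucial step in (ii) is the passage from the pointwise condition
\[
(\forall y\in C)\quad \max_{u\in\mathcal{A}z}\scal{y-z}{u-(x-z)}\geq 0
\]
to the existence of a single $u^\star\in\mathcal{A}z$ that works uniformly in $y$. You obtain this interchange via Sion's minimax theorem together with weak compactness of $\mathcal{A}z$ and a subsequent attainment argument. When the paper needs the same $\max/\forall$--to--$\exists/\forall$ swap later, in the proof of Proposition~\ref{prop:3}, it instead invokes \cite[Lemma~1]{Blum94} (see also \cite[Lemma~2.14]{ComH05}), a Ky~Fan--type selection lemma tailored to precisely this situation. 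Both routes rest on the same convexity and weak-compactness hypotheses delivered by $C\subset\inte\dom\mathcal{A}$; yours makes the role of weak compactness of $\mathcal{A}z$ fully explicit, while the Blum--Oettli lemma packages the argument in a form that also covers the upper-hemicontinuous multivalued setting used throughout \cite{ComH05}.
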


\begin{remark}
Note that the condition $C\subset\inte\dom\mathcal{A}$ allows us to
take 
the maximum in \eqref{e:FFF} instead of the supremum. This is 
a consequence of the weakly compactness of the sets
$(\mathcal{A}x)_{x\in C}$
(see \cite[Lemma~2.15]{ComH05} for details).
\end{remark}

\begin{proposition}
\label{prop:equiv}
Let $\mathcal{A}\colon\HH\to 2^{\HH}$ be a maximally monotone operator
and
suppose that $C\subset\inte\dom\mathcal{A}$. Then
$\zer(\mathcal{A}+\mathcal{N}_C)=S_{F_{\mathcal{A}}}$.
\end{proposition}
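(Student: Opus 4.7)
The plan is to route everything through resolvents and their fixed point sets, since the two preceding propositions already provide the key resolvent identities.

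First I would recall the general fact that for any maximally monotone operator $\mathcal{B}\colon\HH\to 2^\HH$ with full-domain resolvent, $\zer\mathcal{B}=\Fix\mathcal{J}_{\mathcal{B}}$; indeed, $0\in\mathcal{B}x\Leftrightarrow x\in(\Id+\mathcal{B})x\Leftrightarrow x=\mathcal{J}_{\mathcal{B}}x$. I would apply this to the operator $\mathcal{A}+\mathcal{N}_C$, which is maximally monotone because $C\subset\inte\dom\mathcal{A}$ ensures the standard qualification condition $\dom\mathcal{N}_C\cap\inte\dom\mathcal{A}\neq\emp$ for Rockafellar's sum theorem, so that $\mathcal{J}_{\mathcal{A}+\mathcal{N}_C}$ is well defined on $\HH$.

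Next I would invoke Proposition~\ref{prop:22}(i) to note that $F_{\mathcal{A}}$ satisfies Assumption~\ref{a:1}, which permits applying Proposition~\ref{prop:A} with $F$ replaced by $F_{\mathcal{A}}$. That proposition supplies two facts: $S_{F_{\mathcal{A}}}=\zer\AAA_{F_{\mathcal{A}}}$ (item (ii)) and $\mathcal{J}_{\AAA_{F_{\mathcal{A}}}}=J_{F_{\mathcal{A}}}$ (item (iii) with $\gamma=1$). Combining the latter with Proposition~\ref{prop:22}(ii), which gives $J_{F_{\mathcal{A}}}=\mathcal{J}_{\mathcal{A}+\mathcal{N}_C}$, yields the equality of resolvents
\begin{equation*}
\mathcal{J}_{\AAA_{F_{\mathcal{A}}}}=\mathcal{J}_{\mathcal{A}+\mathcal{N}_C}.
\end{equation*}

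Passing to fixed-point sets and using the identity $\zer\mathcal{B}=\Fix\mathcal{J}_{\mathcal{B}}$ on both sides gives $\zer\AAA_{F_{\mathcal{A}}}=\zer(\mathcal{A}+\mathcal{N}_C)$, and then Proposition~\ref{prop:A}(ii) rewrites the left-hand side as $S_{F_{\mathcal{A}}}$, completing the proof. I do not expect a genuine obstacle here: the only substantive point is the maximal monotonicity of $\mathcal{A}+\mathcal{N}_C$ under the interior condition, and even this is already implicit in the formulation of Proposition~\ref{prop:22}(ii), so the argument is essentially a chaining of the two preceding propositions through the bijection between maximally monotone operators and their resolvents.
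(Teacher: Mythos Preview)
Your proposal is correct and follows essentially the same route as the paper: both arguments pass through the chain $\zer(\mathcal{A}+\mathcal{N}_C)=\Fix\mathcal{J}_{\mathcal{A}+\mathcal{N}_C}=\Fix J_{F_{\mathcal{A}}}=S_{F_{\mathcal{A}}}$, using Proposition~\ref{prop:22}(ii) for the middle equality. The only cosmetic difference is that the paper obtains $\Fix J_{F_{\mathcal{A}}}=S_{F_{\mathcal{A}}}$ by a direct citation (\cite[Lemma~2.15(i)]{ComH05}), whereas you recover the same fact by detouring through Proposition~\ref{prop:A} and the auxiliary operator $\AAA_{F_{\mathcal{A}}}$.
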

\begin{proof}
Indeed, it follows from \cite[Proposition~23.38]{Livre1}, 
Proposition~\ref{prop:22}, and \cite[Lemma~2.15(i)]{ComH05} that
\begin{equation}
\zer(\mathcal{A}+\mathcal{N}_C)=\Fix(\mathcal{J}_{\mathcal{A}+
\mathcal{N}_C} )=\Fix(J_{ F_{\mathcal{A}}})=S_{F_{\mathcal{A}}}, 
\end{equation}
which yields the result.
\end{proof}

\begin{remark}\
\begin{enumerate}
\item Note that, in the particular case when
$\dom\mathcal{A}=\inte\dom\mathcal{A}=C=\HH$,
Proposition~\ref{prop:equiv} asserts that
$\zer\mathcal{A}=S_{F_{\mathcal{A}}}$, 
which is a well known result (e.g., see \cite[Section~2.1.3]{Konn01}).

\item In Banach spaces, the case when $C=\dom\mathcal{A}\subset\HH$ is
studied in \cite[Theorem~3.8]{Aoya08}.
\end{enumerate}
\end{remark}

The following propositions provide a relation between 
the operators defined in Propositions~\ref{prop:A} and \ref{prop:22}.

\begin{proposition}
\label{prop:3}
Let $\mathcal{B}\colon\HH\to 2^{\HH}$ be maximally monotone and
suppose that $C\subset\inte\dom\mathcal{B}$. Then, 
$\AAA_{F_{\mathcal{B}}}=\mathcal{B}+\mathcal{N}_C$.
\end{proposition}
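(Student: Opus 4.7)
The plan is to establish the identity by showing that the two operators on either side share the same resolvent and are both maximally monotone; since a maximally monotone operator in a Hilbert space is uniquely determined by its resolvent, equality of the operators will follow.

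First I would chain the two resolvent formulas supplied by the preceding results. By Proposition~\ref{prop:22}(i) the bifunction $F_{\mathcal{B}}$ satisfies Assumption~\ref{a:1}, so Proposition~\ref{prop:A}\ref{prop:Aii} applied with $F=F_{\mathcal{B}}$ and $\gamma=1$ gives $\mathcal{J}_{\AAA_{F_{\mathcal{B}}}}=J_{F_{\mathcal{B}}}$. On the other hand, Proposition~\ref{prop:22}(ii) applied with $\mathcal{A}=\mathcal{B}$ yields $J_{F_{\mathcal{B}}}=\mathcal{J}_{\mathcal{B}+\mathcal{N}_C}$. Composing these identities, $\mathcal{J}_{\AAA_{F_{\mathcal{B}}}}=\mathcal{J}_{\mathcal{B}+\mathcal{N}_C}$.

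Next I would verify maximal monotonicity of both operators. For $\AAA_{F_{\mathcal{B}}}$ this is Proposition~\ref{prop:A}\ref{prop:Ai}. For $\mathcal{B}+\mathcal{N}_C$, the hypothesis $C\subset\inte\dom\mathcal{B}$ produces $\dom\mathcal{N}_C\cap\inte\dom\mathcal{B}=C\neq\emp$, whereupon Rockafellar's classical sum theorem for maximally monotone operators in Hilbert space guarantees that $\mathcal{B}+\mathcal{N}_C$ is maximally monotone. Since two maximally monotone operators with the same resolvent must coincide, the identity $\AAA_{F_{\mathcal{B}}}=\mathcal{B}+\mathcal{N}_C$ follows.

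The only genuine obstacle is invoking the appropriate sum rule for maximal monotonicity; everything else reduces to concatenating two previously established resolvent formulas. As a sanity check one could verify the easy inclusion $\mathcal{B}+\mathcal{N}_C\subseteq\AAA_{F_{\mathcal{B}}}$ directly, by picking $x\in C$, $v\in\mathcal{B}x$, and $w\in\mathcal{N}_Cx$ and adding the bounds $\scal{y-x}{v}\leq F_{\mathcal{B}}(x,y)$ and $\scal{y-x}{w}\leq 0$ valid for all $y\in C$; the reverse inclusion, which would otherwise require a separation-type argument, is precisely what the resolvent-plus-maximality approach avoids.
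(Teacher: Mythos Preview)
Your proof is correct but follows a genuinely different route from the paper's. The paper proves the graph equality directly: for $(x,u)\in\HH^2$ it unfolds the definition of $\AAA_{F_{\mathcal B}}$, rewrites the condition as $(\forall y\in C)\ \max_{v\in\mathcal Bx}\scal{y-x}{v-u}\geq0$, and then invokes a Ky~Fan--type swap (\cite[Lemma~1]{Blum94}, see also \cite[Lemma~2.14]{ComH05}) to interchange the universal quantifier and the maximum, obtaining the existence of $v\in\mathcal Bx$ with $u-v\in\mathcal N_Cx$, i.e., $u\in(\mathcal B+\mathcal N_C)x$. Your approach instead concatenates the two resolvent identities of Propositions~\ref{prop:A}\ref{prop:Aii} and~\ref{prop:22}(ii) and concludes via maximal monotonicity. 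Your argument is shorter and reuses work already done, at the price of importing Rockafellar's sum rule; the paper's argument is more self-contained and makes the minimax step explicit. As a minor observation, the sum rule is in fact dispensable in your scheme: since $\mathcal A=\mathcal J_{\mathcal A}^{-1}-\Id$ holds for any set-valued operator $\mathcal A$, equality of the (set-valued) resolvents already forces $\AAA_{F_{\mathcal B}}=\mathcal B+\mathcal N_C$ without a separate maximality check on the right-hand side.
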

\begin{proof}
Let $(x,u)\in\HH^2$. It follows from \eqref{e:FFF} and 
\cite[Lemma~1]{Blum94} (see also \cite[Lemma~2.14]{ComH05}) that
\begin{align}
u\in\AAA_{F_{\mathcal{B}}}x\quad&\Leftrightarrow\quad x\in C\quad
\text{and}\quad (\forall y\in C)\quad
F_{\mathcal{B}}(x,y)+\scal{x-y}{u}\geq0\nonumber\\
&\Leftrightarrow\quad x\in C\quad
\text{and}\quad (\forall y\in C)\quad
\max_{v\in\mathcal{B}x}\scal{y-x}{v}+\scal{x-y}{u}\geq0\\
&\Leftrightarrow\quad x\in C\quad
\text{and}\quad (\forall y\in C)\quad
\max_{v\in\mathcal{B}x}\scal{y-x}{v-u}\geq0\\
&\Leftrightarrow\quad x\in C\quad
\text{and}\quad (\exi v\in\mathcal{B}x)(\forall y\in C)\quad
\scal{y-x}{v-u}\geq0\\
&\Leftrightarrow\quad u\in\mathcal{B}x+\mathcal{N}_Cx,
\end{align}
which yields the result.
\end{proof}

\begin{proposition}
\label{prop:4}
Let $G$ be such that Assumption~\ref{a:1} holds, and suppose that
$C=\dom\mathcal{A}_G=\HH$. Then, $F_{\AAA_G}\leq G$.
\end{proposition}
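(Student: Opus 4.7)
The plan is to unwind the definitions of $F_{\AAA_G}$ and $\AAA_G$ and read off the inequality directly. Since $C = \dom\AAA_G = \HH$, we have $C\subset\inte\dom\AAA_G$, so Proposition~\ref{prop:22} applies and $F_{\AAA_G}$ is well defined via \eqref{e:FFF} with a genuine maximum (not just supremum).

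First I would fix arbitrary $x,y\in\HH = C$. Because $\dom\AAA_G = \HH$, the set $\AAA_G x$ is nonempty, and by the remark preceding the proposition it is weakly compact; hence there exists $u^\star\in\AAA_G x$ achieving
\begin{equation}
F_{\AAA_G}(x,y) = \max_{u\in\AAA_G x}\scal{y-x}{u} = \scal{y-x}{u^\star}.
\end{equation}
Next I would invoke the very definition \eqref{e:maxmon} of $\AAA_G$: since $u^\star\in\AAA_G x$ and $y\in C$,
\begin{equation}
G(x,y) + \scal{x-y}{u^\star} \geq 0, \qquad \text{i.e.,}\qquad \scal{y-x}{u^\star} \leq G(x,y).
\end{equation}
Combining the two displays yields $F_{\AAA_G}(x,y)\leq G(x,y)$, which is the claim.

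There is no real obstacle: the proof is a one-line consequence of chaining the two definitions, and the hypothesis $C=\dom\AAA_G=\HH$ is used only to guarantee that $\AAA_G x$ is nonempty (so the maximum is attained and finite) and that the quantifier ``$\forall y\in C$'' in \eqref{e:maxmon} covers the arbitrary $y\in\HH$ we chose. The conditions in Assumption~\ref{a:1} are not needed in the argument itself; they serve only to ensure that $\AAA_G$ and $F_{\AAA_G}$ are the well-defined objects supplied by Propositions~\ref{prop:A} and \ref{prop:22}.
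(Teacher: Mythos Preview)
Your proof is correct and follows essentially the same approach as the paper: both unwind the definition \eqref{e:maxmon} to obtain $\scal{y-x}{u}\leq G(x,y)$ for every $u\in\AAA_Gx$, and then identify the left-hand side (at a maximizer, or after taking the maximum) with $F_{\AAA_G}(x,y)$. The only cosmetic difference is that the paper bounds $\scal{y-x}{u}$ uniformly over $u\in\AAA_Gx$ and then passes to the maximum, whereas you first select a maximizer $u^\star$ and apply the bound to it; these are equivalent.
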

\begin{proof}
Let $(x,y)\in C\times C$ and let $u\in\AAA_Gx$. It
follows from
\eqref{e:maxmon} that $G(x,y)+\scal{x-y}{u}\geq0$,
which yields
\begin{equation}
(\forall u\in \AAA_Gx)\quad \scal{y-x}{u}\leq G(x,y).
\end{equation}
Since $C=\inte\dom\AAA_G=\HH$, the result follows by taking the
maximum in the left side of the inequality.
\end{proof}

\begin{remark}\
\begin{enumerate}
 \item 
Note that the equality in Proposition~\ref{prop:4}
does not hold in general. Indeed, let $\HH=\RR$, $C=\HH$, and
$G\colon(x,y)\mapsto y^2-x^2$. It follows from
\cite[Lemma~2.15(v)]{ComH05} that $G$ satisfy Assumption~\ref{a:1}.
We have $u\in\AAA_Gx$ $\Leftrightarrow$
$(\forall y\in\HH)\quad y^2-x^2+\scal{x-y}{u}\geq0$ $\Leftrightarrow$
$u=2x$ and, hence, for every $(x,y)\in\HH\times\HH$, $F_{\AAA_G}(x,y)
=(y-x)2x=2xy-2x^2$. In particular, for every $y\in\RR\setminus\{0\}$, 
$F_{\AAA_G}(0,y)=0<y^2=G(0,y)$.  
\item In the general case when $C=\dom\mathcal{A}\subset\HH$,
necessary and sufficient conditions for the equality in
Proposition~\ref{prop:4} are provided in \cite[Theorem~4.5]{Aoya08}.
\end{enumerate}

\end{remark}

\section{Algorithm and convergence}
\label{sec:3}

Theorem~\ref{prop:1}\ref{prop:1ii} characterizes the solutions to 
Problem~\ref{prob:1} as the zeros of the sum of two maximally 
monotone operators. Our algorithm is derived from the 
Douglas-Rachford splitting method for solving this auxiliary monotone 
inclusion. This algorithm was first proposed in \cite{Doug56} 
in finite dimensional spaces when the operators are linear and the 
generalization to general maximally monotone operators in Hilbert 
spaces was first developed in \cite{Lion79}.
Other versions involving computational errors of the resolvents
can be found in \cite{Opti04,Ecks92}. The convergence of these methods 
needs the maximal monotonicity of 
the sum of the operators involved, which is not evident to verify
\cite[Section~24.1]{Livre1}. Furthermore, the iterates in these cases 
do not converge to a solution but to a point from which we can calculate 
a solution. These problems were overcame in \cite{Svai11} and, 
later, in \cite[Theorem~25.6]{Livre1}, where the convergence of the sequences
generated by the proposed methods to a zero of the sum of two set-valued 
operators is guaranteed by only assuming the maximal monotonicity of 
each operator. However, in \cite{Svai11} the errors considered do not come 
from inaccuracies on the computation of the resolvent but only from 
imprecisions in a monotone inclusion, which sometimes could be not manipulable.
On the other hand, in \cite[Theorem~25.6]{Livre1} the method includes  
an additional relaxation step but it does not consider inaccuracies in 
its implementation.

We present a variant of the methods presented in 
\cite{Svai11} and \cite[Theorem~25.6]{Livre1}, which has interest
in its own right. The same convergence results are obtained by 
considering a relaxation step as in \cite{Ecks92,Ecks09} and 
errors in the computation of the resolvents as in \cite{Opti04,Ecks92}. 

\begin{theorem}
\label{p:SvaiterEP}
Let $\mathcal{A}$ and $\mathcal{B}$ be two maximally monotone
operators from $\HH$ to $2^{\HH}$ such that
$\zer(\mathcal{A}+\mathcal{B})\neq\emp$. Let $\gamma\in\RPP$, let 
$(\lambda_n)_{n\in\NN}$ be a sequence in $\left]0,2\right[$, and
let $(a_n)_{n\in\NN}$ and $(b_n)_{n\in\NN}$ be sequences in 
$\HH$ such that $b_n\weakly0$,
\begin{equation}
\label{e:aux1svai}
\sum_{n\in\NN}\lambda_n(2-\lambda_n)=\pinf,\quad\text{and}
\quad \sum_{n\in\NN}\lambda_n(\|a_n\|+\|b_n\|)<\pinf.
\end{equation}
Let $x_0\in\HH$ and set
\begin{align}
\label{e:DR0}
(\forall n\in\NN)\quad
% &\left \lfloor \begin{array}{l}
\begin{cases}
y_n=\mathcal{J}_{\gamma\mathcal{B}}x_n+b_n\\
z_n=\mathcal{J}_{\gamma\mathcal{A}}(2y_n-x_n)+a_n\\
x_{n+1}=x_n+\lambda_n(z_n-y_n).
\end{cases}
% \end{array}
% \right.
\end{align}
Then there exists
$x\in\Fix(\mathcal{R}_{\gamma\mathcal{A}}
\mathcal{R}_{\gamma\mathcal{B}})$ such that 
the following hold:
\begin{enumerate}
\item\label{p:SvaiterEPi} 
$\mathcal{J}_{\gamma\mathcal{B}}x\in\zer(\mathcal{A}+\mathcal{B})$.
\item\label{p:SvaiterEPii} 
$(\mathcal{R}_{\gamma\mathcal{A}}
(\mathcal{R}_{\gamma\mathcal{B}}x_n)-x_n)_{n\in\NN}$ converges 
strongly to $0$.
\item\label{p:SvaiterEPiii} 
$(x_n)_{n\in\NN}$ converges weakly to $x$.
\item\label{p:SvaiterEPiv} 
$(y_n)_{n\in\NN}$ converges weakly to
$\mathcal{J}_{\gamma\mathcal{B}}x$.
\end{enumerate}
\end{theorem}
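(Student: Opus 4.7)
The plan is to rewrite the recursion~\eqref{e:DR0} as a perturbed Krasnoselskii--Mann iteration for the nonexpansive operator $\mathcal{N}:=\mathcal{R}_{\gamma\mathcal{A}}\mathcal{R}_{\gamma\mathcal{B}}$, invoke Lemma~\ref{l:1}, and then translate its conclusions back into the DR variables. I would introduce the shadow iterates $\widetilde y_n:=\mathcal{J}_{\gamma\mathcal{B}}x_n$ and $\widetilde z_n:=\mathcal{J}_{\gamma\mathcal{A}}(2\widetilde y_n-x_n)$, so that a direct unfolding of the reflections yields $\mathcal{N}x_n-x_n=2(\widetilde z_n-\widetilde y_n)$. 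Since $y_n=\widetilde y_n+b_n$ and $z_n=\mathcal{J}_{\gamma\mathcal{A}}(2\widetilde y_n-x_n+2b_n)+a_n$, the nonexpansiveness of $\mathcal{J}_{\gamma\mathcal{A}}$ produces
\begin{equation*}
z_n-y_n=\tfrac{1}{2}(\mathcal{N}x_n-x_n)+e_n,\qquad \|e_n\|\leq\|a_n\|+3\|b_n\|.
\end{equation*}
Setting $\mu_n:=\lambda_n/2\in\left]0,1\right[$ and $c_n:=2e_n$ converts \eqref{e:DR0} into $x_{n+1}=x_n+\mu_n(\mathcal{N}x_n+c_n-x_n)$, and the hypotheses \eqref{e:aux1svai} become $\sum_n\mu_n(1-\mu_n)=\pinf$ and $\sum_n\mu_n\|c_n\|<\pinf$.

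Next, I would verify that $\Fix\mathcal{N}\neq\emp$: by the Lions--Mercier correspondence, for every $p\in\zer(\mathcal{A}+\mathcal{B})$ with a witness $u\in\mathcal{B}p\cap(-\mathcal{A}p)$, the point $p+\gamma u$ lies in $\Fix\mathcal{N}$. Hence Lemma~\ref{l:1} applies and yields $x\in\Fix\mathcal{N}$ with $x_n\weakly x$ and $\mathcal{N}x_n-x_n\to 0$ strongly; these are exactly \ref{p:SvaiterEPii} and \ref{p:SvaiterEPiii}. For \ref{p:SvaiterEPi}, setting $y:=\mathcal{J}_{\gamma\mathcal{B}}x$, the identity $\mathcal{N}x=x$ unfolds to $\mathcal{J}_{\gamma\mathcal{A}}(2y-x)=y$; reading off the two resolvent inclusions then gives $\gamma^{-1}(x-y)\in\mathcal{B}y$ and $\gamma^{-1}(y-x)\in\mathcal{A}y$, and summing these delivers $0\in(\mathcal{A}+\mathcal{B})y$.

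For \ref{p:SvaiterEPiv}, since $b_n\weakly 0$ it suffices to show $\widetilde y_n\weakly y$. The sequence $(\widetilde y_n)$ is bounded and, by the previous step, shares weak cluster points with $(\widetilde z_n)$ because $\widetilde z_n-\widetilde y_n\to 0$ strongly. Given a subsequence with $\widetilde y_{n_k}\weakly y^*$, I would combine the monotonicity inequality for $\mathcal{B}$ at $(\widetilde y_{n_k},\gamma^{-1}(x_{n_k}-\widetilde y_{n_k}))$ against the reference pair $(y,\gamma^{-1}(x-y))\in\gr\mathcal{B}$ with the monotonicity inequality for $\mathcal{A}$ at $(\widetilde z_{n_k},\gamma^{-1}(2\widetilde y_{n_k}-x_{n_k}-\widetilde z_{n_k}))$ against $(y,\gamma^{-1}(y-x))\in\gr\mathcal{A}$. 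Using $\widetilde z_{n_k}-\widetilde y_{n_k}\to 0$ to merge the diagonal terms, both inequalities squeeze $\|\widetilde y_{n_k}-y\|^2$ and $\langle\widetilde y_{n_k}-y,x_{n_k}-x\rangle$ to coincide asymptotically; the identification $y^*=y$ is then extracted through the extended-solution-set Fej\'er monotonicity argument of~\cite{Svai11}, equivalently~\cite[Theorem~25.6]{Livre1}.

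The main obstacle lies precisely in this last step: resolvents of maximal monotone operators are not weakly sequentially continuous in general, so $\widetilde y_n\weakly y$ cannot follow from weak continuity of $\mathcal{J}_{\gamma\mathcal{B}}$. It has to be distilled by exploiting the maximal monotonicity of $\mathcal{A}$ and $\mathcal{B}$ simultaneously together with the strong vanishing $\widetilde z_n-\widetilde y_n\to 0$ produced by Step~2 via Lemma~\ref{l:1}.
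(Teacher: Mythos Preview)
Your proposal is correct and follows essentially the same route as the paper: rewrite \eqref{e:DR0} as a perturbed Krasnosel'ski\u{\i}--Mann iteration for $\mathcal{R}_{\gamma\mathcal{A}}\mathcal{R}_{\gamma\mathcal{B}}$ with $\mu_n=\lambda_n/2$ and the error bound $\|c_n\|\le 2(\|a_n\|+3\|b_n\|)$, invoke Lemma~\ref{l:1} for \ref{p:SvaiterEPii}--\ref{p:SvaiterEPiii}, and handle \ref{p:SvaiterEPiv} via the shadow pairs $(\widetilde y_n,x_n-\widetilde y_n)\in\gra\gamma\mathcal{B}$ and $(\widetilde z_n,2\widetilde y_n-x_n-\widetilde z_n)\in\gra\gamma\mathcal{A}$ together with $\widetilde z_n-\widetilde y_n\to 0$. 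The only cosmetic difference is that where you gesture at the ``extended-solution-set'' argument of \cite{Svai11}/\cite[Theorem~25.6]{Livre1}, the paper invokes the closely related \cite[Corollary~25.5]{Livre1} directly, which cleanly delivers $(y^*,x-y^*)\in\gra\gamma\mathcal{B}$ and hence $y^*=\mathcal{J}_{\gamma\mathcal{B}}x$ without the somewhat vague ``squeeze'' you describe.
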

\begin{proof}
Denote
$\mathcal{T}=\mathcal{R}_{\gamma\mathcal{A}}
\mathcal{R}_{\gamma\mathcal {B}}$. Since
$\mathcal{R}_{\gamma\mathcal{A}}$ and 
$\mathcal{R}_{\gamma\mathcal{B}}$ are nonexpansive operators,
$\mathcal{T}$ is nonexpansive as 
well. Moreover, since \cite[Proposition~25.1(ii)]{Livre1} states 
that $\mathcal{J}_{\gamma\mathcal{B}}(\Fix\mathcal{T})=
\zer(\mathcal{A}+\mathcal{B})$, we
deduce that $\Fix\mathcal{T}\neq\emp$. Note that \eqref{e:DR0} can be
rewritten 
as
\begin{equation}
(\forall n\in\NN)\quad x_{n+1}=x_n+\mu_n(\mathcal{T}x_n+c_n-x_n),
\end{equation}
where, for every $n\in\NN$,
\begin{equation}
\label{e:muerr}
\mu_n=\frac{\lambda_n}{2}\quad\text{and}\quad
c_n=2\big(\mathcal{J}_{\gamma\mathcal{A}}
\big(2(\mathcal{J}_{\gamma\mathcal{B}}x_n+b_n)-x_n\big)+a_n- 
\mathcal{J}_{\gamma\mathcal{A}}
(2\mathcal{J}_{\gamma\mathcal{B}}x_n-x_n)-b_n\big).
\end{equation}
Hence, it follows from the nonexpansivity of 
$\mathcal{J}_{\gamma\mathcal{A}}$ that, for every $n\in\NN$,
\begin{align}
\|c_n\|&=2\|\mathcal{J}_{\gamma\mathcal{A}}
\big(2(\mathcal{J}_{\gamma\mathcal{B}}x_n+b_n)-x_n\big)+a_n- 
\mathcal{J}_{\gamma\mathcal{A}}
(2\mathcal{J}_{\gamma\mathcal{B}}x_n-x_n)-b_n\|\nonumber\\
&\leq 2\|\mathcal{J}_{\gamma\mathcal{A}}
\big(2(\mathcal{J}_{\gamma\mathcal{B}}x_n+b_n)-x_n\big)- 
\mathcal{J}_{\gamma\mathcal{A}}
(2\mathcal{J}_{\gamma\mathcal{B}}
x_n-x_n)\|+2\|a_n\|+2\|b_n\|\nonumber\\
&\leq 2\|2(\mathcal{J}_{\gamma\mathcal{B}}x_n+b_n)-x_n-
(2\mathcal{J}_{\gamma\mathcal{B}}x_n-x_n)\|
+2\|a_n\|+2\|b_n\|\nonumber\\
&= 2(\|a_n\|+3\|b_n\|)
\end{align}
and, therefore, from \eqref{e:aux1svai} and \eqref{e:muerr} we obtain
\begin{equation}
\sum_{n\in\NN}\mu_n\|c_n\|\leq
\sum_{n\in\NN}\lambda_n(\|a_n\|+3\|b_n\|)
\leq 3\sum_{n\in\NN}\lambda_n(\|a_n\|+\|b_n\|)<\pinf. 
\end{equation}
Moreover, since the sequence $(\lambda_n)_{n\in\NN}$ is in
$\left]0,2\right[$, it follows from \eqref{e:muerr} that
$(\mu_n)_{n\in\NN}$ is a sequence in $\left]0,1\right[$ and, 
from \eqref{e:aux1svai} we obtain
\begin{equation}
\sum_{n\in\NN}\mu_n(1-\mu_n)=\frac14\sum_{n\in\NN}
\lambda_n(2-\lambda_n)=\pinf.
\end{equation}

\ref{p:Svaiteri}. 
This follows from \cite[Proposition~25.1(ii)]{Livre1}.

\ref{p:Svaiterii} and \ref{p:Svaiteriii}. These follow from 
Lemma~\ref{l:1}.
 
\ref{p:Svaiteriv}. From the nonexpansivity 
of $\mathcal{J}_{\gamma\mathcal{B}}$ we obtain
\begin{equation}
\label{e:yboun}
\|y_n-y_0\|
\leq\|\mathcal{J}_{\gamma\mathcal{B}}x_n-
\mathcal{J}_{\gamma\mathcal{B}}x_0\|+\|b_n-b_0\|
\leq\|x_n-x_0\|+\|b_n-b_0\|.
\end{equation}
It follows from \ref{p:Svaiteriii} and $b_n\weakly0$ that 
$(x_n)_{n\in\NN}$ and $(b_n)_{n\in\NN}$ are bounded, respectively. 
Hence, \eqref{e:yboun} implies that $(y_n)_{n\in\NN}$ is bounded 
as well. Let $y\in\HH$ be a weak sequential cluster
point of $(y_n)_{n\in\NN}$, say $y_{k_n}\weakly y$, and set 
\begin{equation}
\label{e:tildas}
(\forall n\in\NN)\quad
\begin{cases}
\widetilde{y}_n=\mathcal{J}_{\gamma\mathcal{B}}x_n\\
\widetilde{z}_n=\mathcal{J}_{\gamma\mathcal{A}}(2\widetilde{y}
_n-x_n)\\
\widetilde{u}_n=2\widetilde{y}_n-x_n-\widetilde{z}_n\\
\widetilde{v}_n=x_n-\widetilde{y}_n.
\end{cases}
\end{equation}
It follows from \eqref{e:DR0} that
\begin{equation}
\label{e:tildas2}
(\forall n\in\NN)\quad
\begin{cases}
(\widetilde{z}_n,\widetilde{u}_n)\in\gra\gamma\mathcal{A}\\
(\widetilde{y}_n,\widetilde{v}_n)\in\gra\gamma\mathcal{B}\\
\widetilde{u}_n+\widetilde{v}_n=\widetilde{y}_n-\widetilde{z}_n.
\end{cases} 
\end{equation}
For every $n\in\NN$, we obtain from \eqref{e:tildas}
\begin{align}
\|\widetilde{z}_{k_n}-\widetilde{y}_{k_n}\|
&=\|\mathcal{J}_{\gamma\mathcal{A}}
(2\mathcal{J}_{\gamma\mathcal{B}}x_{k_n}-x_{k_n})-
\mathcal{J}_{\gamma\mathcal{B}}x_{k_n}\|
\nonumber\\
&=\frac12\|2\mathcal{J}_{\gamma\mathcal{A}}
(2\mathcal{J}_{\gamma\mathcal{B}}x_{k_n}-x_{k_n})
-(2\mathcal{J}_{\gamma\mathcal{B}}x_{k_n}-x_{k_n})-x_{k_n}
\|\nonumber\\
&=\frac12\|\mathcal{R}_{\gamma\mathcal{A}}
(\mathcal{R}_{\gamma\mathcal{B}}x_{k_n})-x_{k_n}\|.
\end{align}
Hence, \ref{p:Svaiterii} yields 
$\widetilde{z}_{k_n}-\widetilde{y}_{k_n}\to0$, and, therefore,
from \eqref{e:tildas2} we obtain that
$\widetilde{u}_{k_n}+\widetilde{v}_{k_n}\to0$.
Moreover, it follows from $b_{k_n}\weakly0$, $y_{k_n}\weakly y$, and 
\eqref{e:DR0} that $\widetilde{y}_{k_n}\weakly y$, and, hence,
$\widetilde{z}_{k_n}\weakly y$. 
Thus, from \ref{p:Svaiteriii} and \eqref{e:tildas},
we obtain $\widetilde{u}_{k_n}\weakly y-x$ and 
$\widetilde{v}_{k_n}\weakly x-y$. Altogether, from 
\cite[Corollary~25.5]{Livre1} we deduce that
$y\in\zer(\gamma\mathcal{A}+\gamma\mathcal{B})=\zer(\mathcal{A}
+\mathcal{B})$, $(y,y-x)\in\gra\gamma\mathcal{A}$,
and $(y,x-y)\in\gra\gamma\mathcal{B}$. Hence, $y=\mathcal{J}_{\gamma
\mathcal{B}}x$ and $y\in\dom\mathcal{A}$. Therefore, we conclude that
$\mathcal{J}_{\gamma\mathcal{B}}x$ is the 
unique weak sequential cluster point of $(y_n)_{n\in\NN}$ and then
$y_n\weakly\mathcal{J}_{\gamma\mathcal{B}}x$.
\end{proof}

Now we present our method for solving Problem~\ref{prob:1},
which is an application of Theorem~\ref{p:SvaiterEP} to
the auxiliary monotone inclusion obtained in Theorem~\ref{prop:1}.

\begin{theorem}
\label{t:main}
Let $C$, $F$, and $G$ be as in Problem~\ref{prob:1} and suppose 
that $\spa(C-C)$ is closed. Let $\gamma\in\RPP$, let 
$(\lambda_n)_{n\in\NN}$ be a sequence in $\left]0,2\right[$, and
let $(a_n)_{n\in\NN}$ and $(b_n)_{n\in\NN}$ be sequences in 
$\HH$ such that $b_n\weakly0$,
\begin{equation}
\label{e:aux2}
\sum_{n\in\NN}\lambda_n(2-\lambda_n)=\pinf,\quad\text{and}
\quad \sum_{n\in\NN}\lambda_n(\|a_n\|+\|b_n\|)<\pinf.
\end{equation}
Let $x_0\in\HH$ and set
\begin{align}
\label{e:DR0ep}
(\forall n\in\NN)\quad
% &\left \lfloor \begin{array}{l}
\begin{cases}
y_n=J_{\gamma G}x_n+b_n\\
z_n=J_{\gamma F}(2y_n-x_n)+a_n\\
x_{n+1}=x_n+\lambda_n(z_n-y_n).
\end{cases}
% \end{array}
% \right.
\end{align}
Then there exists $x\in\Fix(R_{\gamma F}R_{\gamma G})$ such that 
the following hold:
\begin{enumerate}
\item\label{p:Svaiteri} 
$J_{\gamma G}x\in S_{F+G}$.
\item\label{p:Svaiterii} 
$(R_{\gamma F}(R_{\gamma G}x_n)-x_n)_{n\in\NN}$ converges 
strongly to $0$.
\item\label{p:Svaiteriii} 
$(x_n)_{n\in\NN}$ converges weakly to $x$.
\item\label{p:Svaiteriv} 
$(y_n)_{n\in\NN}$ converges weakly to $J_{\gamma G}x$.
\end{enumerate}
\end{theorem}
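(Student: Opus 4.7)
The plan is to apply Theorem~\ref{p:SvaiterEP} directly to the maximally monotone operators $\mathcal{A}=\AAA_F$ and $\mathcal{B}=\AAA_G$ furnished by Proposition~\ref{prop:A}. First I would verify the nonemptiness hypothesis: since $\spa(C-C)$ is closed and $S_{F+G}\neq\emp$ by Problem~\ref{prob:1}, Theorem~\ref{prop:1}\ref{prop:1ii} gives $\zer(\AAA_F+\AAA_G)=S_{F+G}\neq\emp$, so the operators $\AAA_F$ and $\AAA_G$ satisfy the standing hypothesis of Theorem~\ref{p:SvaiterEP}. The maximality of each is part of Proposition~\ref{prop:A}\ref{prop:Ai}.

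Next I would identify the recursion. By Proposition~\ref{prop:A}\ref{prop:Aii}, for every $\gamma\in\RPP$ we have $\mathcal{J}_{\gamma\AAA_F}=J_{\gamma F}$ and $\mathcal{J}_{\gamma\AAA_G}=J_{\gamma G}$, and consequently $\mathcal{R}_{\gamma\AAA_F}=2J_{\gamma F}-\Id=R_{\gamma F}$ and likewise $\mathcal{R}_{\gamma\AAA_G}=R_{\gamma G}$. Therefore the iteration \eqref{e:DR0ep} coincides term-by-term with \eqref{e:DR0} for the choice $(\mathcal{A},\mathcal{B})=(\AAA_F,\AAA_G)$, and the parameter conditions \eqref{e:aux2} are exactly \eqref{e:aux1svai}.

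From here, Theorem~\ref{p:SvaiterEP} delivers $x\in\Fix(\mathcal{R}_{\gamma\AAA_F}\mathcal{R}_{\gamma\AAA_G})=\Fix(R_{\gamma F}R_{\gamma G})$ together with the four conclusions. Conclusion \ref{p:SvaiterEPi} gives $J_{\gamma G}x=\mathcal{J}_{\gamma\AAA_G}x\in\zer(\AAA_F+\AAA_G)$, and composing with the equality $\zer(\AAA_F+\AAA_G)=S_{F+G}$ established above yields \ref{p:Svaiteri}. The assertions \ref{p:Svaiterii}--\ref{p:Svaiteriv} follow verbatim from \ref{p:SvaiterEPii}--\ref{p:SvaiterEPiv} once the resolvent and reflection identifications are substituted.

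There is no genuine obstacle here beyond bookkeeping, since the substantive work has already been done: Theorem~\ref{prop:1} converts the equilibrium problem into the monotone inclusion, and Theorem~\ref{p:SvaiterEP} handles the convergence of the inexact relaxed Douglas--Rachford iteration. The only point that requires mild care is aligning the order of the two resolvents in \eqref{e:DR0ep} with that in \eqref{e:DR0}, so that the cluster point $x$ is mapped to the solution by the \emph{second} resolvent applied to $x_n$, namely $J_{\gamma G}$, rather than by $J_{\gamma F}$; once this is checked, the translation is mechanical.
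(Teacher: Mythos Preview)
Your proposal is correct and follows essentially the same approach as the paper: you set $\mathcal{A}=\AAA_F$, $\mathcal{B}=\AAA_G$, invoke Proposition~\ref{prop:A} for maximal monotonicity and the resolvent identification, use Theorem~\ref{prop:1}\ref{prop:1ii} together with $S_{F+G}\neq\emp$ to guarantee $\zer(\AAA_F+\AAA_G)\neq\emp$, and then read off the conclusions from Theorem~\ref{p:SvaiterEP}. Your explicit remark about matching the order of the resolvents so that $J_{\gamma G}$ is the one sending $x$ to the solution is a helpful sanity check, but otherwise the argument is identical to the paper's.
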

\begin{proof}
Note that, from Theorem~\ref{prop:1}\ref{prop:1ii}, we have that 
\begin{equation}
\label{e:condreg}
\zer(\AAA_F+\AAA_G)=S_{F+G}\neq\emp, 
\end{equation}
where $\AAA_F$ and $\AAA_G$ are defined in \eqref{e:maxmon} and
maximally monotone by Proposition~\ref{prop:A}\ref{prop:Ai}.
In addition, it follows from Proposition~\ref{prop:A}\ref{prop:Aii}
that \eqref{e:DR0ep} can be written equivalently as \eqref{e:DR0}
with $\mathcal{A}=\AAA_F$ and $\mathcal{B}=\AAA_G$. Hence, the results
are derived from Theorem~\ref{p:SvaiterEP},
Proposition~\ref{prop:A}, and Theorem~\ref{prop:1}.
\end{proof}

\begin{remark}
Note that the closeness of $\spa(C-C)$ and 
Theorem~\ref{prop:1}\ref{prop:1ii} yields \eqref{e:condreg}, which 
allows us to apply Theorem~\ref{p:SvaiterEP} for obtaining our result.
However, it is well known that this qualification condition does not
always hold in infinite dimensional spaces. In such cases, 
it follows from Theorem~\ref{prop:1}\ref{prop:1i} that 
Theorem~\ref{t:main} still holds if $\zer(\AAA_F+\AAA_G)\neq\emp$.
Conditions for assuring existence of solutions to monotone inclusions can be 
found in \cite[Proposition~3.2]{Nash} and \cite{Livre1}.
\end{remark}

Finally, let us show an application of Theorem~\ref{t:main} for solving
mixed equilibrium problems. Let $f\in\Gamma_0(\HH)$. 
For every $x\in\HH$, $\prox_fx$ is the unique minimizer of the strongly 
convex function $y\mapsto f(y)+\|y-x\|^2/2$. The operator 
$\prox_f\colon\HH\to\HH$ thus defined is called the proximity operator.

\begin{example}
In Problem~\ref{prob:1}, suppose that $G\colon(x,y)\mapsto f(y)-f(x)$,
where $f\in\Gamma_0(\HH)$ is such that $C\subset\dom f$. Then 
Problem~\ref{prob:1} becomes
\begin{equation}
\label{e:mep}
\text{find}\quad x\in C\quad\text{such that}\quad (\forall y\in C)\quad
F(x,y)+f(y)\geq f(x), 
\end{equation}
which is known as a mixed equilibrium problem. This problem arises
in several applied problems and it can be solved by using 
some methods developed in \cite{Ceng08,Peng09b,Peng10,Yao09}. 
However, all this methods consider implicit steps involving
simultaneously $F$ and $f$, which is not easy to compute in 
general. On the other hand, it follows from
\cite[Lemma~2.15(v)]{ComH05} that \eqref{e:DR0ep} becomes 
\begin{equation}
\label{e:DR0mep}
(\forall n\in\NN)\quad
% \left \lfloor \begin{array}{l}
\begin{cases}
y_n=\prox_{\iota_C+\gamma f}x_n+b_n\\
z_n=J_{\gamma F}(2y_n-x_n)+a_n\\
x_{n+1}=x_n+\lambda_n(z_n-y_n),
\end{cases}
% \end{array}
% \right.
\end{equation}
which computes
separately the resolvent of $F$ and the proximity operator of $f$.
If $\spa(C-C)$ is closed, Theorem~\ref{t:main} ensures the weak 
convergence of the iterates of this method to a solution to 
\eqref{e:mep}. Examples of computable proximity operators and
resolvents of bifunctions can be 
found in \cite{Smms05} and \cite{ComH05}, respectively.
\end{example}
\section{Acknowledgement}
I thank Professor Patrick L. Combettes for bringing this problem to my 
attention and for helpful discussions. In addition, I would like to
thank the anonymous reviewers for their comments that help improve the
manuscript.

\end{document}